\renewcommand{\Re}{\re}
\newcommand{\fa}{\mathfrak{a}}
\newcommand{\ee}{\mathrm e}
\newcommand{\ii}{\mathrm i}
\newcommand{\id}{\mathrm{Id}}
\newcommand{\eps}{\varepsilon}
\DeclarePairedDelimiter{\abs}{\lvert}{\rvert}
\DeclarePairedDelimiter{\norm}{\lVert}{\rVert}
\newcommand{\cL}{\mathcal{L}}
\newcommand{\NN}{\mathbb{N}}
\newcommand{\RR}{\mathbb{R}}
\newcommand{\CC}{\mathbb{C}}
\newcommand{\1}{\mathbf{1}}
\renewcommand{\subset}{\subseteq}
\DeclareMathOperator*{\esssup}{ess\,sup}
\DeclareMathOperator*{\essinf}{ess\,inf}
\newcommand{\obs}{\mathrm{obs}}
\newcommand{\QQ}{\mathbb{Q}}
\renewcommand\Re{\operatorname{Re}}
\renewcommand{\d}[1]{\ensuremath\, {\operatorname{d}\!{#1}}}
\DeclareMathOperator{\Id}{Id}
\newtheorem{thm}{Theorem}[section]
\newtheorem{proposition}[thm]{Proposition}
\newtheorem{hypo}[thm]{Hypothesis}
\theoremstyle{definition}
\newtheorem{example}[thm]{Example}
\theoremstyle{remark}
\newtheorem{rem}[thm]{Remark}
\newtheorem{remark}[thm]{Remark}
\numberwithin{equation}{section} 
\def\Ddots{\mathinner{\mkern1mu\raise\p@
\vbox{\kern7\p@\hbox{.}}\mkern2mu
\raise4\p@\hbox{.}\mkern2mu\raise7\p@\hbox{.}\mkern1mu}}
\title[A unified observability result]{A unified observability result for non-autonomous observation problems}
\author{Fabian Gabel and Albrecht Seelmann}
\address{Institut f\"ur Mathematik, TU Hamburg, Am Schwarzenberg-Campus 3, 21073 Hamburg, Germany}
\email{\href{mailto:fabian.gabel@tuhh.de}{fabian.gabel@tuhh.de}}
\address{ Fakultät für Mathematik, TU Dortmund, Vogelpothsweg 87, 44227 Dortmund, Germany}
\email{\href{mailto:albrecht.seelmann@tu-dortmund.de}{albrecht.seelmann@tu-dortmund.de}}
\keywords{Banach space, evolution family, non-autonomous system, null-controllability, observability, uncertainty principle,
dissipation estimate, density point}
\subjclass[2020]{Primary 93B07; Secondary 35Q93, 47N70, 93B28}
\thanks{}
\dedicatory{}
\begin{document}
\begin{abstract}
	A final-state observability result in the Banach space setting for non-autonomous observation problems is obtained that covers
	and extends all previously known results in this context, while providing a streamlined proof that follows the established
	Lebeau-Robbiano strategy.
\end{abstract}
\maketitle
%%%%%%%%%%%%%%%%%%%%%%%%%%%%%%%%%%%%%%%%%%%%%%%%%%%%%%%%%%%%%%%%%%%%%%%%%%%%%%%%%%%%%%%%%%%%%%%%%%%%%%%%%%%%%%%%%%%%%%%%%%%%%%%%%%%%%%%%%%%%%%%%%%%%%%%%%%%%%%%%%%%%

\section{Introduction}

Observability and null-controllability results for (non-)autonomous Cauchy problems are relevant especially in the field of
control theory of partial differential equations and have recently attracted a lot of attention in the literature. Here, the most
common approach towards \emph{final-state observability} is a so-called \emph{Lebeau-Robbiano strategy}, which combines a
suitable \emph{uncertainty principle} with a corresponding \emph{dissipation estimate} for the evolution family describing the
evolution of the system, see \eqref{eq:uncertaintyPrinciple} and \eqref{eq:dissipationEstimate} below, respectively. Certain
null-controllability results can then be inferred from final-state observability via a standard duality argument, see, e.g.,
\cite{BombachGST-22} for more information and also \cite{Gallaun_diss} for an holistic overview of duality theory for control
systems.

Such a Lebeau-Robbiano strategy has been considered, for instance, in \cite{Miller-10,BeauchardP,BeauchardEP-S-20,NakicTTV-20,
GallaunST-20,BombachGST-22,GallaunMS-23,KruseS-23}, see also \cite{EgidiNSTTV-20} for a review of other related results in this
context. The two most general results in this direction so far are \cite[Theorem~3.3]{BombachGST-22} and
\cite[Theorem~13]{BeauchardEP-S-20}, each highlighting different aspects and exhibiting certain advantages and disadvantages over
the other, both with regard to hypotheses and the asserted conclusion, see the discussion below. The aim of the present work is
to present a unified extension of both mentioned results, taking the best of each, thus allowing to apply the Lebeau-Robbiano
strategy to a broader range of observation problems and, at the same time, providing a streamlined proof.

\section{Lebeau-Robbiano strategy for non-autonomous observation problems}

For the reader's convenience, let us fix the following notational setup.

\begin{hypo}\label{hyp:observability}
	Let $X$ and $Y$ be Banach spaces, $T>0$, $E \subseteq [0,T]$ be measurable with positive Lebesgue measure, and
	$(U(t,s))_{0\leq s\leq t \leq T}$ be an exponentially bounded evolution family on $X$. Let $C\colon [0,T]\to \cL(X,Y)$ be
	essentially bounded on $E$ such that $[0,T] \ni t\mapsto \norm{C(t) U(t,0) x_0}_Y$ is measurable for all $x_0\in X$.
\end{hypo}
Here, we denote by $\cL(X,Y)$ the space of bounded operators from $X$ to $Y$. Also recall that
$(U(t,s))_{0\leq s\leq t \leq T} \subseteq \cL(X) \coloneqq \cL(X,X)$ is called an \emph{evolution family} of bounded operators on $X$
if
\begin{align}\label{eq:algebraic}
	U(s,s)
	=
	\Id
	\quad\text{ and }\quad 
  U(t,s)U(s,r)
  =
  U(t,r)
  \quad\text{ for }\
  0\leq r\leq s \leq t \leq T
  .
\end{align}
It is called \emph{exponentially bounded} if there exist $M \geq 1$ and $\omega \in \RR$ such that for all
$0 \leq s \leq t \leq T$ we have the bound $\norm{ U(t,s) }_{\cL(X)} \leq M \ee^{\omega (t - s)}$.

Evolution families are oftentimes used to describe the evolution of non-autonomous Cauchy problems, see, e.g.,
\cite[Section~2]{BombachGST-22} and the references cited therein. The family $(C(t))_{t\in[0,T]}$ in the mapping
$t\mapsto \norm{C(t) U(t,0) x_0}_Y$ can be understood as \emph{observation operators} through which the state of the system is
observed at each time $t \geq 0$. In the context of $L^p$-spaces, these are often chosen as multiplication operators by
characteristic functions for some (time-dependent) sensor sets, see, e.g., Example~\ref{ex:elliptic} below.

The following theorem now covers and extends all known previous results in this context, see the discussion below.

\begin{thm}\label{thm:observability}
	Assume Hypothesis~\ref{hyp:observability}. Let $(P_\lambda)_{\lambda>0}$ be a family in $\cL(X)$ such that for some constants
	$d_0,d_1,\gamma_1 > 0$ we have
	\begin{equation}\label{eq:uncertaintyPrinciple}\tag{essUCP}
		\forall \lambda > 0, \forall x\in X
		\colon
		\
		\norm{P_\lambda x}_{X}
		\leq
		d_0 \ee^{d_1 \lambda^{\gamma_1}} \essinf \bigl\{ E \ni \tau \mapsto \norm{ C(\tau)  P_\lambda x }_{Y} \bigr\}
		.
	\end{equation}
	Suppose also that for some constants $d_2\geq 1$, $d_3,\gamma_2,\gamma_3 >0$ with $\gamma_2 > \gamma_1$ and $\gamma_4 \geq 0$
	we have
	\begin{equation}\label{eq:dissipationEstimate}\tag{DE}
  	\begin{aligned}
			&\forall \lambda > 0, \forall 0\leq s <  t\leq T, \forall x\in X \colon\\
			&\norm{ (\id-P_\lambda) U(t,s) x }_{X} 
			\leq 
			d_2 \max\bigl\{ 1, (t - s)^{-\gamma_4} \bigr\} \ee^{-d_3 \lambda^{\gamma_2} (t-s)^{\gamma_3}} \norm{x}_{X}
			.
		\end{aligned}
	\end{equation}

	Then, there exists a constant $C_{\mathrm{obs}} > 0$ such that for each $r \in [1,\infty]$ and all $x_0\in X$ we have the
	final-state observability estimate
	\begin{equation}\label{eq:observabilityEstimate}\tag{OBS}
		\norm{ U(T,0) x_0 }_{X}
		\leq
		C_\obs
		\begin{cases}
			\Bigl( \int_E \norm{ C(t)U(t,0) x_0 }_Y^r \d t \Bigr)^{1/r}, & r < \infty,\\[1em]
			\esssup_{t\in E} \norm{ C(t)U(t,0)x_0 }_{Y}, & r=\infty.                
		\end{cases}
	\end{equation}
	Moreover, if for some interval $(\tau_1,\tau_2) \subseteq [0,T]$ with $\tau_1 < \tau_2$ we have
	$\abs{(\tau_1,\tau_2) \cap E} = \tau_2 - \tau_1$, then, depending on the value of $r$, the constant~$C_{\mathrm{obs}}$ can be
	bounded as
	\begin{equation}\label{eq:observabilityConstant}
		C_\obs
		\leq
		\frac{C_1}{(\tau_2-\tau_1)^{1/r}} \exp\Biggl( \frac{C_2}{(\tau_2-\tau_1)^{\frac{\gamma_1\gamma_3}{\gamma_2-\gamma_1}}}
			+ C_3T \Biggr)
		,
	\end{equation}
	with the usual convention $1/r = 0$ if $r=\infty$, where $C_1,C_2,C_3 > 0$ are constants not depending on $r$, $T$, $E$,
	$\tau_1$, or $\tau_2$.
\end{thm}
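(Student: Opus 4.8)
The plan is to run the Lebeau--Robbiano telescoping scheme, reducing the whole statement to a single \emph{one-step inequality} that is then iterated along a geometric sequence of subintervals; the two hypotheses \eqref{eq:uncertaintyPrinciple} and \eqref{eq:dissipationEstimate} enter exactly once each per step, and the condition $\gamma_2>\gamma_1$ is what makes the iteration converge.

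\textbf{Diagonalising the uncertainty principle.} The first, and arguably most delicate, step is to upgrade \eqref{eq:uncertaintyPrinciple}, which controls $\norm{P_\lambda x}_X$ by the essential infimum over $E$ of $\tau\mapsto\norm{C(\tau)P_\lambda x}_Y$ for each \emph{fixed} $x$, to a pointwise on-diagonal estimate: for every $x_0\in X$ and $\lambda>0$, for a.e.\ $\tau\in E$,
\[
	\norm{P_\lambda U(\tau,0)x_0}_X \le d_0\,\ee^{d_1\lambda^{\gamma_1}}\,\norm{C(\tau)\,P_\lambda U(\tau,0)x_0}_Y.
\]
This is exactly where the measurability assumption in Hypothesis~\ref{hyp:observability} and the essential boundedness of $C$ on $E$ are used: writing the inner vector as $z_\tau=P_\lambda U(\tau,0)x_0$, I would apply \eqref{eq:uncertaintyPrinciple} along a countable dense set of vectors, collect the corresponding null sets, and transfer the bound to the genuinely $\tau$-dependent vector $z_\tau$ by an $\esssup_E\norm{C(\cdot)}$-Lipschitz approximation argument, using that $\essinf f\le f(\tau)$ holds a.e.\ for each fixed integrand. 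I expect this diagonalisation to be the main obstacle, since it is precisely the point at which the non-autonomous, vector-dependent observation must be reconciled with the fixed-vector form of \eqref{eq:uncertaintyPrinciple}.

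\textbf{One-step inequality.} Fix a subinterval $[a,b]\subseteq[0,T]$, set $\tau_*=(b-a)/2$, and put $w=U(a,0)x_0$. For $\tau$ in the right half $F:=\bigl[(a+b)/2,b\bigr]\cap E$ I would split $U(b,0)x_0=U(b,\tau)P_\lambda U(\tau,0)x_0+U(b,\tau)(\id-P_\lambda)U(\tau,a)w$, estimate the second summand by \eqref{eq:dissipationEstimate} with $s=a$ (so that $\tau-a\ge\tau_*$ tames the singular factor, $\max\{1,(\tau-a)^{-\gamma_4}\}\le\max\{1,\tau_*^{-\gamma_4}\}$, and yields $\ee^{-d_3\lambda^{\gamma_2}\tau_*^{\gamma_3}}$), and bound the first summand by the diagonal estimate after inserting $\norm{C(\tau)P_\lambda U(\tau,0)x_0}_Y\le\norm{C(\tau)U(\tau,0)x_0}_Y+\esssup_E\norm{C(\cdot)}\cdot\norm{(\id-P_\lambda)U(\tau,a)w}_X$. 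Passing $U(b,\tau)$ through exponential boundedness ($\ee^{\omega(b-\tau)}\le\ee^{\abs{\omega}(b-a)}$) and then averaging over $\tau\in F$ via Hölder's inequality to produce the $L^r$-norm of the observation (with the convention $1/r=0$ for $r=\infty$) gives
\[
	\norm{U(b,0)x_0}_X \le \alpha\Bigl(\int_F\norm{C(\tau)U(\tau,0)x_0}_Y^r\,\d\tau\Bigr)^{1/r}+\beta\,\norm{U(a,0)x_0}_X,
\]
with $\alpha\lesssim\ee^{\abs{\omega}(b-a)}\ee^{d_1\lambda^{\gamma_1}}\abs{F}^{-1/r}$ and $\beta\lesssim\ee^{\abs{\omega}(b-a)}\ee^{d_1\lambda^{\gamma_1}}\max\{1,\tau_*^{-\gamma_4}\}\ee^{-d_3\lambda^{\gamma_2}\tau_*^{\gamma_3}}$.

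\textbf{Telescoping and the explicit constant.} For the explicit bound \eqref{eq:observabilityConstant} I would partition $(\tau_1,\tau_2)$ into intervals $J_k$ accumulating at $\tau_2$ with geometrically decreasing lengths $\abs{J_k}\sim q^k(\tau_2-\tau_1)$, and on $J_k$ run the step with a cutoff $\lambda_k$ tuned so that $\lambda_k^{\gamma_2}\abs{J_k}^{\gamma_3}\gg\lambda_k^{\gamma_1}$; since $\gamma_2>\gamma_1$ this is achieved by $\lambda_k\sim\abs{J_k}^{-\gamma_3/(\gamma_2-\gamma_1)}$, which forces the products $\prod_{j<k}\beta_j$ to decay super-geometrically, so that the boundary remainder $\bigl(\prod_{j}\beta_j\bigr)\norm{U(\,\cdot\,,0)x_0}_X$ vanishes by exponential boundedness while the prefactors stay summable after a discrete Hölder inequality. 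Tracking the resulting geometric sums produces the stated form of $C_\obs$, the exponent $\frac{\gamma_1\gamma_3}{\gamma_2-\gamma_1}$ being exactly the balance in $\lambda_k$; the factor $(\tau_2-\tau_1)^{-1/r}$ comes from the $\abs{F}^{-1/r}$ in $\alpha$, and the $\ee^{C_3T}$ from accumulating the $\ee^{\abs{\omega}(b-a)}$ factors together with one final application of exponential boundedness to transport $U(\tau_2,0)x_0$ to $U(T,0)x_0$. For the general assertion \eqref{eq:observabilityEstimate} with merely measurable $E$ of positive measure the deterministic partition is unavailable; instead I would fix a Lebesgue density point $\tau^\ast\in E$ and build the accumulating intervals shrinking to $\tau^\ast$ from the left, so that $\abs{J_k\cap E}\ge(1-\eps)\abs{J_k}$ at all small scales guarantees enough observation mass and dissipation gap in each step to drive the same telescoping, at the cost of losing explicit control of the constant.
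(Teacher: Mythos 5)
Your proposal follows essentially the same route as the paper's proof: a single interpolation inequality combining \eqref{eq:uncertaintyPrinciple} and \eqref{eq:dissipationEstimate} with the cutoff optimised at scale $\lambda\sim(t-s)^{-\gamma_3/(\gamma_2-\gamma_1)}$, iterated along a geometric chain of intervals accumulating at a Lebesgue density point of $E$ (this is exactly Proposition~\ref{prop:phungw-13}, adapted from Phung--Wang), followed by a telescoping sum. The architecture, the role of $\gamma_2>\gamma_1$, and the provenance of the factors $(\tau_2-\tau_1)^{-1/r}$ and $\ee^{C_3T}$ all match.

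Three details as written would need repair. First, the ``diagonalisation'' of \eqref{eq:uncertaintyPrinciple}: a countable dense set of vectors presupposes separability of $X$, which is not assumed, and your exceptional null sets also depend on the continuously varying cutoff $\lambda$. The paper dispatches this point by discarding from $E$ a single null set so that the essential infimum becomes an infimum and $C(\cdot)$ becomes bounded on $E$; after that, evaluation at $\tau=t$ is legitimate for \emph{every} $t\in E$ and every vector, and $\lambda$ may in any case be restricted to a countable dense set since the resulting one-step bound is continuous in $\lambda$. You are right that this is the place where the essinf formulation must be reconciled with the $t$-dependent vector, but the heavy approximation machinery you sketch is neither needed nor, as stated, available. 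Second, the orientation at the density point: in your one-step inequality the dissipative remainder sits at the \emph{earlier} endpoint $a$, so the chain of intervals accumulates at $\lim_k a_k$ from above; you therefore need a \emph{right} density point with the intervals lying to its right, not intervals ``shrinking to $\tau^\ast$ from the left''. Third, the convergence of the telescoping is not automatic: with $\lambda_k\sim\abs{J_k}^{-\gamma_3/(\gamma_2-\gamma_1)}$ both $\log\alpha_k$ and $-\log\beta_k$ scale like $\abs{J_k}^{-\gamma_1\gamma_3/(\gamma_2-\gamma_1)}$, so one must verify that each $\beta_k$ beats the \emph{next}, larger prefactor $\alpha_{k+1}$; the paper arranges this quantitative balance by fixing $q^{\gamma_1\gamma_3/(\gamma_2-\gamma_1)}=3/4$ and choosing the balance parameter per step so that $4c_4\,\delta_m^{-\gamma_1\gamma_3/(\gamma_2-\gamma_1)}=3c_4\,\delta_{m+1}^{-\gamma_1\gamma_3/(\gamma_2-\gamma_1)}$. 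All three points are fixable, so I regard the plan as sound in outline.
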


The above theorem represents the established Lebeau-Robbiano strategy, in which an uncertainty principle
\eqref{eq:uncertaintyPrinciple}, here with respect to the given family $(P_\lambda)_{\lambda>0}$ and uniform only on the subset
$E \subseteq [0,T]$, and a corresponding dissipation estimate in the form \eqref{eq:dissipationEstimate} are used as an input; it
should be emphasized that the requirement $\gamma_1 < \gamma_2$ is essential here. The output in the form of
\eqref{eq:observabilityEstimate} then constitutes a so-called \emph{final-state observability estimate} for the evolution family
$(U(t,s))_{0\leq s\leq t \leq T} \subseteq \cL(X)$ with respect to the family $(C(t))_{t\in[0,T]}$ of observation operators. The
corresponding constant $C_\obs$ in \eqref{eq:observabilityEstimate} is called \emph{observability constant}. An explicit form of
the constants $C_1,C_2,C_3$ in \eqref{eq:observabilityConstant} is given in Remark~\ref{rem:explicitConstant} below for easier
reference.

\subsection*{Discussion and extensions}

We first comment on two minor extensions of Theorem~\ref{thm:observability}.

\begin{rem}
	(1)
	It becomes clear from the proof, see \eqref{eq:blowup} below, that instead of the polynomial blow-up in the dissipation estimate
	\eqref{eq:dissipationEstimate} for small differences $t-s$ one can also allow a certain \emph{(sub-)exponential} blow-up. More
	precisely, one may replace the term $\max\{1,(t-s)^{-\gamma_4}\}$ by a factor of the form
	$\exp(c(t-s)^{-\frac{\gamma_1\gamma_3}{\gamma_2-\gamma_1}})$ with some constant $c > 0$.

	(2)
	If $\abs{(\tau_1,\tau_2) \cap E} = \tau_2 - \tau_1$ for some interval $(\tau_1,\tau_2) \subseteq [0,T]$ with $\tau_1 < \tau_2$,
	then the dissipation estimate \eqref{eq:dissipationEstimate} is actually needed only for $\tau_1 < s < t \leq \tau_2$, cf.\ part
	(1) of Remark~\ref{rem:proof} below.

\end{rem}

Let us now compare Theorem~\ref{thm:observability} to earlier results in the literature.

\begin{rem}
	(1)
	In the particular case where $[\tau_1,\tau_2] = [0,T]$, the bound on $C_\obs$ in \eqref{eq:observabilityConstant} is completely
	consistent, except perhaps for some minor differences in the explicit form of the constants $C_1,C_2,C_3$ in
	Remark~\ref{rem:explicitConstant} below, with all bounds obtained earlier for $E = [0,T]$ in \cites{NakicTTV-20,GallaunST-20,
	BombachGST-20} in the autonomous case (i.e.\ $C(t) \equiv C$ and the evolution family being actually a semigroup) and in
	\cite{BombachGST-22} in the non-autonomous case.

	(2)
	Theorem~\ref{thm:observability} covers \cite[Theorem~3.3]{BombachGST-22}, while allowing a polynomial blow-up for small
	differences $t - s$ in the dissipation estimate \eqref{eq:dissipationEstimate}. Such a blow-up has first been considered in
	\cite[Theorem~13]{BeauchardEP-S-20}, but under much more restrictive assumptions, see item (4) below. Moreover, in contrast to
	\cite[Theorem~3.3]{BombachGST-22}, Theorem~\ref{thm:observability} requires the uncertainty relation
	\eqref{eq:uncertaintyPrinciple} only on a subset of $[0,T]$ of positive measure, instead of the whole interval $[0,T]$, and thus
	allows far more general families of observation operators. These families also need to be uniformly bounded only on this
	measurable subset and not on the whole interval $[0,T]$.

	(3)
	The results from \cite[Theorem A.1]{BombachGST-20} and \cite[Theorem 2.1]{GallaunST-20} formulate a variant of our
	Theorem~\ref{thm:observability} in the autonomous case with $E = [0,T]$, but assume \eqref{eq:uncertaintyPrinciple} and
	\eqref{eq:dissipationEstimate} only for $\lambda > \lambda^*$ with some $\lambda^* \geq 0$. Our current formulation with the
	whole range $\lambda > 0$, just as in \cite[Theorem~3.3]{BombachGST-22}, is not really a restriction to that. Indeed, by a
	change of variable, one may then simply consider the family $(P_{\lambda+\lambda^*})_{\lambda>0}$ instead, with a
	straightforward adaptation of the parameters $d_0$ and $d_1$ in \eqref{eq:uncertaintyPrinciple}. In this sense,
	Theorem~\ref{thm:observability} completely covers the results from \cite{BombachGST-20} and \cite{GallaunST-20}.

	(4)
	Let $\tau_1 \in [0,T)$ be such that $\abs{ (\tau_1,T) \cap E } > 0$. By a change of variable, namely via considering
	$C(\cdot + \tau_1)$ on $[0,T-\tau_1]$ and the evolution family $U(t+\tau_1,s+\tau_1)_{0\leq s<t\leq T-\tau_1}$, one may replace
	$U(T,0)$, $U(t,0)$, and $E$ in \eqref{eq:observabilityEstimate} by $U(T,\tau_1)$, $U(t,\tau_1)$, and $(\tau_1,T) \cap E$
	respectively; note that \eqref{eq:uncertaintyPrinciple} and \eqref{eq:dissipationEstimate} then remain valid with the same
	constants. In this sense, Theorem~\ref{thm:observability} entirely covers \cite[Theorem~13]{BeauchardEP-S-20}, while leaving
	the Hilbert space setting and not requiring strong continuity or contractivity of the evolution family. At the same time, our
	bound on $C_\obs$ in \eqref{eq:observabilityConstant} contains an additional prefactor $1/(\tau_2-\tau_1)^{1/r}$ in front of the
	exponential term, which significantly changes the asymptotics of the estimate as $\tau_2-\tau_1$ (and thus also $T$) gets large.
	Such improved asymptotics has proved extremely useful in the past, for instance, when considering homogenization limits as in
	\cite{NakicTTV-20}.

\end{rem}

In order to support the above comparison, we briefly revisit \cite[Theorem~4.8]{BombachGST-22} in the following example.

\begin{example}\label{ex:elliptic}
	Let $\fa$ be a uniformly strongly elliptic polynomial of degree $m \geq 2$ in $\RR^d$ with coefficients
	$a_\alpha \in L^\infty([0,T])$, that is, $\fa \colon [0,T] \times \RR^d \to \CC$ with
	\[
		\fa(t,\xi)
		=
		\sum_{\substack{\abs{\alpha}\leq m}} a_\alpha(t) (\ii\xi)^\alpha
		,\quad
		t \in [0,T]
		,\
		\xi \in \RR^d
		,
	\]
	such that for some $c > 0$ we have
	\[
		\Re \sum_{\abs{\alpha} = m} a_\alpha(t,\xi)
		\geq
		c\abs{\xi}^m
		,\quad
		t \in [0,T]
		,\
		\xi \in \RR^d
		.
	\]
	Let $p \in [1,\infty]$. It was shown in \cite[Theorem~4.4]{BombachGST-22} that there is an exponentially bounded evolution
	family $(U_p(t,s))_{0\leq s\leq t\leq T}$ in $L^p(\RR^d)$ associated to $\fa$. Let $(\Omega(t))_{t\in[0,T]}$ be
	a family of measurable subsets of $\RR^d$ such that the mapping $[0,T] \times \RR^d \ni (t,x) \mapsto \1_{\Omega(t)}(x)$ is
	measurable. Then, $\norm{ \1_{\Omega(\cdot)}U_p(\cdot,0)u_0 }_{L^p(\RR^d)}$ is for all $u_0 \in L^p(\RR^d)$ measurable on
	$[0,T]$ by \cite[Lemma~4.7]{BombachGST-22}, so that Hypothesis~\ref{hyp:observability} with $C(t) = \1_{\Omega(t)}$ is satisfied
	for every choice of measurable $E \subseteq [0,T]$ with positive measure. Moreover, a dissipation estimate as in
	\eqref{eq:dissipationEstimate} with $\gamma_2 = m$ (but without blow-up, i.e.\ $\gamma_4 = 0$) was established in the proof of
	\cite[Theorem~4.8]{BombachGST-22} with $P_\lambda$ being some smooth frequency cutoffs. It remains to consider a corresponding
	essential uncertainty principle \eqref{eq:uncertaintyPrinciple}.

	Suppose that the family $(\Omega(t))_{t\in[0,T]}$ of subsets is \emph{uniformly thick} on $E$ in the sense that there are
	$L,\rho > 0$ such that for all $x \in \RR^d$ and all $t \in E$ we have
	\begin{equation}\label{eq:uniformThick}
		\abs{ \Omega(t) \cap ((0,L)^d + x) }
		\geq
		\rho L^d
		.
	\end{equation}
	Following the proof of \cite[Theorem~4.8]{BombachGST-22}, we see that an essential uncertainty principle as in
	\eqref{eq:uncertaintyPrinciple} holds with $\gamma_1 = 1 < \gamma_2$, so that Theorem~\ref{thm:observability} can be applied.
	Here, the set $E$ for which \eqref{eq:uniformThick} needs to hold could be any measurable subset of $[0,T]$ with positive
	measure, for instance, $E = [0,T] \setminus \QQ$ (satisfying $\abs{E} = T$) or even some fractal set, say of
	Cantor-Smith-Volterra	type. In particular, this allows for completely arbitrary choices of measurable $\Omega(t)$ for
	$t \notin E$, even $\Omega(t) = \emptyset$. By contrast, these choices for $\Omega(t)$ would not be allowed in
	\cite[Theorem~4.8]{BombachGST-22}, where \eqref{eq:uniformThick} is required to hold for all $x \in \RR^d$ and \emph{all}
	$t \in [0,T]$ and is thus much more restrictive on the choice of $(\Omega(t))_{t\in[0,T]}$.
\end{example}

\begin{rem}
	In the situation of Example~\ref{ex:elliptic} with $p < \infty$, it was shown in \cite[Theorem~4.10]{BombachGST-22} that an
	observability estimate as in \eqref{eq:observabilityEstimate} can hold with $r < \infty$ only if the family
	$(\Omega(t))_{t\in[0,T]}$ is \emph{mean thick} in the sense that for some $L,\rho > 0$ we have
	\[
		\frac{1}{T} \int_0^T \abs{ \Omega(t) \cap ((0,L)^d + x) } \d t
		\geq
		\rho L^d
		\quad\text{ for all }\
		x \in \RR^d
		.
	\]
	It is easy to see that families which are uniformly thick on a subset of $[0,T]$ of positive measure as in~\eqref{eq:uniformThick} are also mean thick in the
	above sense (with possibly different parameters), but the converse need not be true. A corresponding example in $\RR$ is the
	family $(\Omega(t))_{t\in[0,T]}$ with $\Omega(t) = (0,\infty)$ for $t \leq T/2$ and $\Omega(t) = (-\infty,0)$ for
	$T/2 < t \leq T$. It is yet unclear whether such choices also lead to an observability estimate as in
	\eqref{eq:observabilityEstimate} or anything similar. In this sense, Example~\ref{ex:elliptic} and
	Theorem~\ref{thm:observability} still leave a gap between necessary and sufficient conditions on the family
	$(\Omega(t))_{t\in[0,T]}$ towards final-state observability.
\end{rem}

\section{Proof of Theorem~\ref{thm:observability}}

Our proof of Theorem~\ref{thm:observability} is a streamlined adaptation of earlier approaches, especially of that from
\cite{BombachGST-22} and its predecessors \cite{NakicTTV-20}, \cite{GallaunST-20}, and \cite{BeauchardEP-S-20}. It avoids the
interpolation argument in \cite{BombachGST-22} and is thus much more direct and, at the same time, requires an uncertainty
relation only on a measurable subset of $[0,T]$ of positive measure.

\begin{proof}[Proof of Theorem~\ref{thm:observability}]
	Let us fix $x_0 \in X$. For $0  \leq t \leq T$ we abbreviate
	\[
		F(t) \coloneqq \norm{ U(t,0)x_0 }_X
		,\quad
		G(t) \coloneqq \norm{ C(t) U(t,0)x_0 }_Y
		.
	\]
	By H\"older's inequality we clearly have 
	\begin{equation}\label{eq:Hoelder}
		\norm{ G(\cdot) }_{L^1(E)}
		\leq
		\abs{E}^{1-\frac{1}{r}} \norm{ G(\cdot) }_{L^r(E)}
	\end{equation}
	with the usual convention $1/\infty = 0$. Hence, estimate \eqref{eq:observabilityEstimate} for $r > 1$ follows from the one for
	$r = 1$ by multiplying the corresponding constant $C_\obs$ by $\abs{E}^{1-\frac{1}{r}} \leq \max\{ 1 , \abs{E} \}$. It therefore
	suffices to show \eqref{eq:observabilityEstimate} for $r = 1$, which in the new notation reads
	\begin{equation}\label{eq:final-stateFG}
  	F(T)
  	\leq
  	C_\obs \int_E G(t) \d t
  	.
	\end{equation}

	Upon possibly removing from $E$ a set of measure zero, we may assume without loss of generality that
	\eqref{eq:uncertaintyPrinciple} holds with $\essinf$ replaced by $\inf$ and that $C(\cdot)$ is uniformly bounded on $E$. Let us
	then show that there exist constants $c_1,c_2 > 0$ such that for all $0 \leq s < t \leq T$ with $t \in E$ and all
	$\eps \in (0,1)$ we have
	\begin{equation}\label{eq:epsilonBalance}
		F(t) 
		\leq
		c_1 \exp\biggl( c_2 \Bigl( \frac{1}{t-s} \Bigr)^{\frac{\gamma_1\gamma_3}{\gamma_2-\gamma_1}} \biggr)
			\bigl( \varepsilon^{-1} G(t) + \varepsilon F(s) \bigr)
		.
	\end{equation}
	To this end, let $\eps \in (0,1)$ and fix $0 \leq s < t \leq T$ with $t \in E$. For $\lambda > 0$ we introduce
	\[
		F_\lambda
		\coloneqq
		\norm{ P_\lambda U(t,0)x_0 }_X
		,\quad
		F_\lambda^\perp
		\coloneqq
		\norm{ (\id - P_\lambda)U(t,0)x_0 }_X
		,
	\]
	as well as
	\[
		G_\lambda
		\coloneqq
		\norm{ C(t)P_\lambda U(t,0)x_0 }_Y
		,\quad
		G_\lambda^\perp
		\coloneqq
		\norm{ C(t)(\id - P_\lambda)U(t,0)x_0 }_Y
		.
	\]
	The uncertainty relation \eqref{eq:uncertaintyPrinciple} and the uniform boundedness of $C(\cdot)$ on $E$ then give
	\[
		F_\lambda
		\leq
		d_0 \ee^{d_1 \lambda^{\gamma_1}} G_\lambda
		\quad\text{ and }\quad
		G_\lambda^\perp
		\leq
		\norm{ C(\cdot) }_{E,\infty} F_\lambda^\perp
		\quad\text{ for all }\
		\lambda > 0
		,
	\]
	where $\norm{C(\cdot)}_{E,\infty} \coloneqq \sup_{\tau \in E} \norm{C(\tau)}_{\cL(X,Y)} < \infty$. Since by triangle inequality
	$F(t) \leq F_\lambda + F_\lambda^\perp$ and $G_\lambda \leq G(t) + G_\lambda^\perp$, the latter implies that for all
	$\lambda > 0$ we have
	\begin{equation}\label{eq:F}
		F(t)
		\leq
		d_0 \ee^{d_1\lambda^{\gamma_1}} \bigl( G(t) + \norm{C(\cdot)}_{E,\infty} F_\lambda^\perp \bigr) + F_\lambda^\perp
		\leq
		\ee^{d_1\lambda^{\gamma_1}} \bigl( d_0 G(t) + (d_0\norm{C(\cdot)}_{E,\infty} + 1)F_\lambda^\perp \bigr)
		.
	\end{equation}
	Now, writing $U(t,0)x_0 = U(t,s)U(s,0)x_0$ by \eqref{eq:algebraic}, we obtain from \eqref{eq:dissipationEstimate} with
	$x = U(s,0)x_0$ that
	\[
		F_\lambda^\perp
		\leq
		d_2 \max\{ 1 , (t-s)^{-\gamma_4} \} \ee^{-d_3\lambda^{\gamma_2}(t-s)^{\gamma_3}} F(s)
		,
	\]
	and inserting this into the preceding estimate \eqref{eq:F} yields for all $\lambda > 0$ that
	\begin{equation}\label{eq:F2}
		\begin{aligned}
			F(t)
			&\leq
			c_1 \ee^{d_1\lambda^{\gamma_1}} \max\{ 1 , (t-s)^{-\gamma_4} \}
				\bigl( G(t) + \ee^{-d_3\lambda^{\gamma_2}(t-s)^{\gamma_3}} F(s) \bigr)\\
			&=
			c_1 \ee^{f(\lambda)} \max\{ 1 , (t-s)^{-\gamma_4} \} \bigl( \ee^{\frac{d_3}{2}\lambda^{\gamma_2}(t-s)^{\gamma_3}}G(t) +
				\ee^{-\frac{d_3}{2}\lambda^{\gamma_2}(t-s)^{\gamma_3}} F(s) \bigr)
		\end{aligned}
	\end{equation}
	with
	\begin{equation}\label{eq:c1}
		c_1
		\coloneqq
		\max\{ d_0 , (d_0\norm{C(\cdot)}_{E,\infty}+1)d_2 \}
		\geq
		1
		\quad\text{ and }\quad
		f(\lambda)
		\coloneqq
		d_1\lambda^{\gamma_1} - \frac{d_3}{2}\lambda^{\gamma_2}(t-s)^{\gamma_3}
		.
	\end{equation}
	Let us maximize $f(\lambda)$ with respect to $\lambda$. In light of $\gamma_2 > \gamma_1$ by hypothesis, a straightforward
	calculation reveals that $f$ takes its maximal value on $(0,\infty)$ at the point
	\[
		\lambda^* 
		\coloneqq 
		\Bigr( \frac{2d_1\gamma_1}{d_3\gamma_2} \Bigr)^{\frac{1}{\gamma_2-\gamma_1}}
			\Bigl( \frac{1}{t-s} \Bigr)^{\frac{\gamma_3}{\gamma_2-\gamma_1}}
		>
		0
		.
	\]
	Taking into account the relation $\frac{\gamma_1}{\gamma_2-\gamma_1} + 1 = \frac{\gamma_2}{\gamma_2-\gamma_1}$, we observe that
	\begin{align*}
		\frac{d_3}{2} (\lambda^*)^{\gamma_2} (t-s)^{\gamma_3}
		&=
		\frac{d_3}{2} \Bigl( \frac{2d_1\gamma_1}{d_3\gamma_2} \Bigr)^{\frac{\gamma_2}{\gamma_2-\gamma_1}}
			\Bigl( \frac{1}{t-s} \Bigr)^{\frac{\gamma_2\gamma_3}{\gamma_2-\gamma_1}-\gamma_3}\\
		&=
		\frac{d_1\gamma_1}{\gamma_2} \Bigl( \frac{2d_1\gamma_1}{d_3\gamma_2} \Bigr)^{\frac{\gamma_1}{\gamma_2-\gamma_1}}
			\Bigl( \frac{1}{t-s} \Bigr)^{\frac{\gamma_1\gamma_3}{\gamma_2-\gamma_1}}
			=
			\frac{d_1\gamma_1}{\gamma_2} (\lambda^*)^{\gamma_1}
		.
	\end{align*}
	We may therefore estimate $f(\lambda)$ as
	\[
		f(\lambda)
		\leq
		f(\lambda^*)
		=
		d_1 \Bigl( 1-\frac{\gamma_1}{\gamma_2} \Bigr) (\lambda^*)^{\gamma_1}
		=
		d_1 \Bigl( 1-\frac{\gamma_1}{\gamma_2} \Bigr) \Bigl( \frac{2d_1\gamma_1}{d_3\gamma_2}
			\Bigr)^{\frac{\gamma_1}{\gamma_2-\gamma_1}} \Bigl( \frac{1}{t-s} \Bigr)^{\frac{\gamma_1\gamma_3}{\gamma_2-\gamma_1}}
		.
	\]
	Moreover, using the elementary bound $\xi^\alpha \leq \ee^{\alpha \xi}$ for $\alpha,\xi > 0$, we have
	\begin{equation}\label{eq:blowup}
		\max\{ 1, (t - s)^{-\gamma_4} \}
		\leq
		\exp\biggl( \frac{\gamma_4 (\gamma_2 - \gamma_1)}{\gamma_1\gamma_3} \Bigl(\frac{1}{t - s}
			\Bigr)^{\frac{\gamma_1\gamma_3}{\gamma_2 - \gamma_1}} \biggr)
		.
	\end{equation}
	Inserting this and the preceding bound on $f(\lambda)$ into \eqref{eq:F2}, we arrive for all $\lambda > 0$ at
	\[
		F(t) 
    \leq
    c_1 \exp\biggl( c_2 \Bigl(\frac{1}{t-s}\Bigr)^{\frac{\gamma_1\gamma_3}{\gamma_2-\gamma_1}} \biggr)
    	\Bigl( \ee^{\frac{d_3}{2} \lambda^{\gamma_2}(t-s)^{\gamma_3}} G(t) + \ee^{-\frac{d_3}{2} \lambda^{\gamma_2}(t-s)^{\gamma_3}}
    	F(s)\Bigr)
	\]
	with
	\begin{equation}\label{eq:c2}
		c_2
		\coloneqq 
		d_1 \Bigl(1-\frac{\gamma_1}{\gamma_2}\Bigr)\Bigl( \frac{2 d_1\gamma_1}{d_3\gamma_2} \Bigr)^{\frac{\gamma_1}{\gamma_2-\gamma_1}}
			+ \frac{\gamma_4 ( \gamma_2 - \gamma_1)}{\gamma_1 \gamma_3}
		.
	\end{equation}
	We finally choose $\lambda > 0$ such that $\eps = \ee^{-\frac{d_3}{2}\lambda^{\gamma_2}(t-s)^{\gamma_3}}$, which shows that
	\eqref{eq:epsilonBalance} is valid; note that indeed neither $c_1$ nor $c_2$ depend on $s$ or $t$.

	Let $q \coloneqq \bigl(\frac{3}{4}\bigr)^{\frac{\gamma_2-\gamma_1}{\gamma_1\gamma_3}} < 1$, and choose by Lebesgue's
	differentiation theorem a (right) density point $\ell \in [0,T) \cap E$ of $E$ in the sense of
	Appendix~\ref{sec:approxDensityPoint}. Proposition~\ref{prop:phungw-13} then guarantees that there is a strictly decreasing
	sequence $(\ell_m)_{m\in\NN}$ in $(\ell,T]$ of the form $\ell_m = \ell + q^{m-1}(\ell_1-\ell)$, $m \in \NN$, satisfying
	\begin{equation}\label{eq:phungw-13:Proportion}
		\abs{(\ell_{m+1},\ell_m) \cap E}
		\geq
		\frac{\delta_m}{3}
		,\quad
		m \in \NN
		,
	\end{equation}
	where $\delta_{m} \coloneqq \ell_m - \ell_{m+1}$, $m \in \mathbb{N}$. It is also easy to see that
	\begin{equation}\label{eq:phungw-13:Step}
		\delta_{m + 1}
		=
		q\delta_{m}
		,\quad
		m \in \NN
		.
	\end{equation}
	Since the evolution family $(U(t,s))_{0\leq s\leq t\leq T}$ is exponentially bounded by hypothesis, there	exist $M \geq 1$ and
	$\omega \in \RR$ such that
	\[
		F(t)
		=
		\norm{ U(t,s)U(s,0)x_0 }_X
		\leq
		M\ee^{\omega(t-s)} F(s)
		\quad\text{ for all }\
		0\leq s\leq t\leq T
		.
	\]
	Setting $\omega_+ \coloneqq \max\{\omega,0\}$, this in particular implies for each $m \in \NN$ and all
	$t \in (\ell_{m+1},\ell_m)$ that
	\begin{equation}\label{eq:FExpBounded}
		F(\ell_m)
		\leq
		M\ee^{\omega(\ell_m-t)} F(t)
		\leq
		M\ee^{\omega_+\delta_1} F(t)
		.
	\end{equation}
	Define
	\[
		\xi_m
		\coloneqq
		\ell_{m+1} + \frac{\delta_m}{6}
		\in
		(\ell_{m+1},\ell_m)
		,\quad
		m \in \NN
		,
	\]
	which in light of \eqref{eq:phungw-13:Proportion} satisfies
	\begin{equation}\label{eq:phungwEstimateXi}
		\abs{ (\xi_m,\ell_m) \cap E }
		=
		\abs{ (\ell_{m+1},\ell_m) \cap E } - \abs{ (\ell_{m+1},\xi_m) \cap E }
		\geq
		\frac{\delta_m}{3} - (\xi_m - \ell_{m+1})
		=
		\frac{\delta_m}{6}
		>
		0
		.
	\end{equation}
	Combining \eqref{eq:FExpBounded} and \eqref{eq:epsilonBalance} with $s = \ell_{m+1}$, we obtain for all $m \in \NN$,
	$t \in (\xi_m,\ell_m) \cap E$, and $\eps \in (0,1)$ that
	\[
		F(\ell_m)
		\leq
		c_3 \exp\Biggl( \frac{c_4}{\delta_m^{\frac{\gamma_1\gamma_3}{\gamma_2-\gamma_1}}} \Biggr)
			\bigl( \varepsilon^{-1} G(t) + \varepsilon F(\ell_{m+1}) \bigr)
	\]
	with
	\[
		c_3
		\coloneqq
		Mc_1\ee^{\omega_+\delta_1}
		\geq
		1
		\quad\text{ and }\quad
		c_4
		\coloneqq
		c_2 6^{\frac{\gamma_1\gamma_3}{\gamma_2-\gamma_1}}
		>
		0
		,
	\]
	where we have taken into account that $t-\ell_{m+1} \geq \xi_m-\ell_{m+1} = \delta_m / 6$. With the particular choice
	\[
		\eps
		=
		c_3^{-1} q \exp\Biggl( -\frac{2c_4}{\delta_m^{\frac{\gamma_1\gamma_3}{\gamma_2-\gamma_1}}} \Biggr)
		\in
		(0,1)
		,
	\]
	the latter turns into
	\begin{equation}\label{eq:telescopeBeforeRearrange}
		F(\ell_m)
		\leq
		c_3^2 q^{-1} \exp\Biggl( \frac{3c_4}{\delta_m^{\frac{\gamma_1\gamma_3}{\gamma_2-\gamma_1}}} \Biggr) G(t)
			+ q \exp\Biggl( -\frac{c_4}{\delta_m^{\frac{\gamma_1\gamma_3}{\gamma_2-\gamma_1}}} \Biggr) F(\ell_{m+1})
		.
	\end{equation}
	Observing that
	\[
		\exp\Biggl( \frac{4c_4}{\delta_m^{\frac{\gamma_1\gamma_3}{\gamma_2-\gamma_1}}} \Biggr)
		=
		\exp\Biggl( \frac{3c_4}{\delta_{m+1}^{\frac{\gamma_1\gamma_3}{\gamma_2-\gamma_1}}} \Biggr)
	\]
	by \eqref{eq:phungw-13:Step} and the choice of $q$, multiplying \eqref{eq:telescopeBeforeRearrange} by $\delta_m$ and
	rearranging terms yields
	\[
		\delta_m \exp\Biggl( -\frac{3c_4}{\delta_m^{\frac{\gamma_1\gamma_3}{\gamma_2-\gamma_1}}} \Biggr) F(\ell_m)
		-
		\delta_{m+1} \exp\Biggl( -\frac{3c_4}{\delta_{m+1}^{\frac{\gamma_1\gamma_3}{\gamma_2-\gamma_1}}} \Biggr) F(\ell_{m+1})
		\leq
		c_3^2 q^{-1} \delta_m G(t)
	\]
	for all $t \in (\xi_m,\ell_m) \cap E$, $m \in \NN$. Taking into account \eqref{eq:phungwEstimateXi}, integrating the latter with
	respect to $t \in (\xi_m,\ell_m) \cap E$ leads to
	\[
		\delta_m \exp\Biggl( -\frac{3c_4}{\delta_m^{\frac{\gamma_1\gamma_3}{\gamma_2-\gamma_1}}} \Biggr) F(\ell_m)
		-
		\delta_{m+1} \exp\Biggl( -\frac{3c_4}{\delta_{m+1}^{\frac{\gamma_1\gamma_3}{\gamma_2-\gamma_1}}} \Biggr) F(\ell_{m+1})
		\leq
		6c_3^2 q^{-1} \int_{\ell_{m+1}}^{\ell_m} \1_E(t) G(t) \d t
	\]
	for all $m \in \NN$. Note here that the exponential boundedness of the evolution family guarantees that the sequence
	$(F(\ell_m))_{m\in \NN}$ is bounded. Since also $\delta_m \to 0$ and $\ell_m \to \ell$ as $m \to \infty$, summing the last
	inequality over all $m \in \NN$ implies by a telescoping sum argument that
	\[
		\delta_1 \exp\Biggl( -\frac{3c_4}{\delta_1^{\frac{\gamma_1\gamma_3}{\gamma_2-\gamma_1}}} \Biggr) F(\ell_1)
		\leq
		6c_3^2 q^{-1} \int_{\ell}^{\ell_1} \1_E(t) G(t) \d t
		\leq
		6c_3^2 q^{-1} \int_E G(t) \d t
		,
	\]
	which can be rewritten as
	\[
		F(\ell_1)
		\leq
		6c_3^2 q^{-1} \delta_1^{-1} \exp\Biggl( \frac{3c_4}{\delta_1^{\frac{\gamma_1\gamma_3}{\gamma_2-\gamma_1}}} \Biggr)
			\int_E G(t) \d t
		.
	\]
	Now, we have $F(T) \leq M\ee^{\omega(T-\ell_1)}F(\ell_1)$ by using once more the exponential boundedness of the evolution
	family, which shows that \eqref{eq:final-stateFG} holds with
	\begin{equation}\label{eq:observabilityConstantExplicit}
		C_\obs
		=
		6c_3^2 q^{-1} \delta_1^{-1} \exp\Biggl( \frac{3c_4}{\delta_1^{\frac{\gamma_1\gamma_3}{\gamma_2-\gamma_1}}} \Biggr)
			M\ee^{\omega(T-\ell_1)}
		.
	\end{equation}

	Finally, suppose that $\abs{ (\tau_1,\tau_2) \cap E } = \tau_2 - \tau_1$ for some interval $(\tau_1,\tau_2) \subseteq [0,T]$
	with $\tau_1 < \tau_2$. We may then simply choose $\ell = \tau_1$ and $\ell_1 = \tau_2$ in the above reasoning, leading to
	$\delta_1 = (1-q)(\tau_2-\tau_1)$.  For $r \geq 1$, in light of \eqref{eq:Hoelder} with $E$ replaced by
	$(\tau_1,\tau_2) \cap E$, we conclude that $C_\obs$ in \eqref{eq:observabilityEstimate} can be bounded as in
	\eqref{eq:observabilityConstant}, which completes the proof.
\end{proof}%

For organizational purposes, we extract from the above proof the following more explicit bound on the observability constant.

\begin{rem}\label{rem:explicitConstant}
	In the case where $\abs{ (\tau_1,\tau_2) \cap E } = \tau_2 - \tau_1$ for some interval $(\tau_1,\tau_2) \subseteq [0,T]$ with
	$\tau_1 < \tau_2$ in (the proof of) Theorem~\ref{thm:observability}, we actually have
	$\abs{(\ell_{m+1},\ell_m) \cap E} = \delta_m$ instead of the weaker \eqref{eq:phungw-13:Proportion}. Consequently, one may
	choose $\xi_m \coloneqq \ell_{m+1} + \delta_m/2$, resulting in $\abs{(\xi_m,\ell_m) \cap E} = \delta_m/2$ instead of
	\eqref{eq:phungwEstimateXi}. We may therefore replace the numerical factor $6$ in \eqref{eq:observabilityConstantExplicit} and
	the constant $c_4$ by $2$, so that
	\[
		C_\obs
		\leq
		\frac{C_1}{(\tau_2-\tau_1)^{1/r}} \exp\Biggl( \frac{C_2}{(\tau_2-\tau_1)^{\frac{\gamma_1\gamma_3}{\gamma_2-\gamma_1}}}
			+ C_3T \Biggr)
	\]
	with
	\[
		C_1
		\coloneqq
		\frac{2M^3c_1^2}{q(1-q)}
		,\quad
		C_2
		\coloneqq
		3c_2 \Bigl( \frac{2}{1-q} \Bigr)^{\frac{\gamma_1\gamma_3}{\gamma_2-\gamma_1}}
		,\quad
		C_3
		\coloneqq
		3\omega_+
		,
	\]
	where
	\[
	  c_1
		=
		\max\{ d_0 , (d_0\norm{C(\cdot)}_{E,\infty}+1)d_2 \}
		\geq
		1
		,\quad
		c_2
		=
		d_1 \Bigl(1-\frac{\gamma_1}{\gamma_2}\Bigr)\Bigl( \frac{2 d_1\gamma_1}{d_3\gamma_2} \Bigr)^{\frac{\gamma_1}{\gamma_2-\gamma_1}}
			+ \frac{\gamma_4 ( \gamma_2 - \gamma_1)}{\gamma_1 \gamma_3}
	\]
	are as in \eqref{eq:c1} and \eqref{eq:c2}, respectively, and where $M \geq 1$ and
	$\omega_+ = \max\{ \omega , 0 \}$, $\omega \in \RR$, are such that $\norm{ U(t,s) }_{\cL(X)} \leq M\ee^{\omega(t-s)}$ for all
	$0 \leq s \leq t \leq T$.
\end{rem}

\begin{rem}\label{rem:proof}
	(1)
	The proof of Theorem~\ref{thm:observability} actually requires the dissipation estimate \eqref{eq:dissipationEstimate} only for
	$\ell \leq s < t \leq \ell_1$.

	(2)
	It is worth to emphasize that in contrast to \cite[Theorem~13]{BeauchardEP-S-20}, our proof of Theorem~\ref{thm:observability}
	does \emph{not} require the sequence $(\ell_m)_{m\in\NN}$ to belong to the set $E$, but only to
	$(\ell,T]$. This relaxed requirement is much easier to satisfy than the stricter one in \cite[Proposition~14]{BeauchardEP-S-20}
	and is reviewed in Proposition~\ref{prop:phungw-13} in the appendix.
\end{rem}

\appendix

\section{Approximating density points of measurable subsets of the real line}\label{sec:approxDensityPoint}

Recall that a point $\ell \in \RR$ is called a \emph{right density point} (resp.\ \emph{density point}) of a measurable set
$E \subset \RR$ if
\begin{equation}\label{eq:lebesguePoint}
	\lim_{\theta \to 0} \frac{\abs{(\ell,\ell+\theta) \cap E}}{\theta}
	=
	1
	\qquad
	(\text{resp.\ }
	\lim_{\theta \to 0} \frac{\abs{(\ell-\theta,\ell+\theta) \cap E}}{2\theta}
	=
	1)
	.
\end{equation}
It is easy to see that every density point is also a right density point, cf.\ \cite[p.\ 32]{Fattorini-05}, and it follows from
Lebesgue's differentiation theorem that almost every point $\ell \in E$ is a density point of $E$, and thus a right density point,
see, e.g., \cite[Corollary~2.14]{Mattila-99}.

The following result is an adapted version of \cite[Proposition~2.1]{PhungW-13}; see also \cite[Lemma~2.1.5]{Fattorini-05} for a
similar statement. We give a proof here in order to be self-contained.

\begin{proposition}\label{prop:phungw-13}
	Let $E \subset \RR$ be measurable with positive Lebesgue measure, and let $\ell \in \RR$ be a right density point of $E$. Then,
	given $q \in (0,1)$, for every $\ell_1 > \ell$ sufficiently close to $\ell$ the strictly decreasing sequence
	$(\ell_m)_{m \in \NN}$ with
	\begin{equation}\label{eq:defellm}
		\ell_m
		=
		\ell + q^{m-1}(\ell_1 - \ell)
		,\quad
		m \in \NN
		,
	\end{equation}
	satisfies
	\begin{equation}\label{eq:ellmEstimate}
    \ell_m -  \ell_{m + 1}
    \leq
    3 \abs{ (\ell_{m + 1}, \ell_m) \cap E }
    ,\quad
    m \in \NN
    .
  \end{equation}
\end{proposition}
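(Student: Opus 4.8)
The plan is to exploit the right density point condition \eqref{eq:lebesguePoint} in its complementary form. Rewriting the defining limit in terms of the set $\RR \setminus E$, for every $\eps > 0$ there is a threshold $\theta_0 > 0$ such that
\[
	\abs{ (\ell, \ell + \theta) \setminus E } \leq \eps \theta \quad \text{for all }\ 0 < \theta \leq \theta_0.
\]
The key observation is that, since $q < 1$, each interval $(\ell_{m+1},\ell_m)$ is contained in $(\ell,\ell_m)$, and the lengths $\ell_m - \ell = q^{m-1}(\ell_1-\ell)$ are all bounded by $\ell_1 - \ell$. Hence a single smallness requirement on $\ell_1 - \ell$ controls the ``bad part'' of every interval in the sequence simultaneously, which is what makes the bound \eqref{eq:ellmEstimate} hold uniformly in $m$.

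Concretely, I would first fix $\eps$ depending only on $q$, namely any $\eps \leq \tfrac{2}{3}(1-q)$, and let $\theta_0$ be the corresponding threshold from the density condition. Then I would choose $\ell_1 > \ell$ close enough to $\ell$ that $h \coloneqq \ell_1 - \ell \leq \theta_0$; this is precisely the meaning of ``$\ell_1$ sufficiently close to $\ell$''. For this choice every $\ell_m - \ell = q^{m-1} h \leq h \leq \theta_0$, so the density estimate applies on each $(\ell,\ell_m)$ and yields
\[
	\abs{ (\ell_{m+1},\ell_m) \setminus E } \leq \abs{ (\ell,\ell_m) \setminus E } \leq \eps\, q^{m-1} h, \quad m \in \NN.
\]

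Writing $\delta_m \coloneqq \ell_m - \ell_{m+1} = q^{m-1} h (1-q)$, so that $q^{m-1} h = \delta_m/(1-q)$, I would finish by subtracting the bad part from the full interval length:
\[
	\abs{ (\ell_{m+1},\ell_m) \cap E } = \delta_m - \abs{ (\ell_{m+1},\ell_m) \setminus E } \geq \delta_m \Bigl( 1 - \frac{\eps}{1-q} \Bigr) \geq \frac{\delta_m}{3},
\]
where the last inequality is exactly the choice $\eps \leq \tfrac{2}{3}(1-q)$; rearranging gives \eqref{eq:ellmEstimate}. There is no substantial obstacle here: the argument is a direct application of the density point definition, and the only point requiring care is the uniformity in $m$. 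This is handled precisely because the nested intervals $(\ell,\ell_m)$ all sit at scales $\leq \ell_1 - \ell$, so one application of \eqref{eq:lebesguePoint} at the largest scale $h$ suffices to control the entire sequence at once.
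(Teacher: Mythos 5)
Your proof is correct and follows essentially the same strategy as the paper: apply the right density point condition at the scale $\ell_m - \ell = q^{m-1}(\ell_1-\ell)$, use the containment $(\ell_{m+1},\ell_m) \subset (\ell,\ell_m)$ to control $\abs{(\ell_{m+1},\ell_m)\setminus E}$, and exploit the fixed ratio $\delta_m = (1-q)(\ell_m-\ell)$ to get a bound uniform in $m$. The only difference is cosmetic: you quantify the density condition as $\abs{(\ell,\ell+\theta)\setminus E} \leq \eps\theta$ with $\eps \leq \tfrac{2}{3}(1-q)$, whereas the paper bounds the exceptional set against $\abs{(\ell,\ell+\theta)\cap E}$ with the constant $\tfrac{1-q}{2(1+q)}$; your normalization makes the final computation slightly more direct.
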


\begin{proof}
	Since $\ell$ is a right density point of $E$ by hypothesis, there is by \eqref{eq:lebesguePoint} some $\theta_0 > 0$, depending
	on $E$ and $q$, such that
	\begin{equation}\label{eq:choicetheta}
		\abs{ (\ell, \ell + \theta) \setminus E }
		<
		\frac{1 - q}{2(1 + q)} \abs{ (\ell, \ell + \theta) \cap E }
		\quad\text{ for all }\
		\theta < \theta_0
		.
	\end{equation}
	We fix an arbitrary $\ell_1 \in (\ell,\ell+\theta_0)$ and define the sequence $(\ell_m)_{m\in\NN}$ as in \eqref{eq:defellm}. It
	remains to show that \eqref{eq:ellmEstimate} holds. To this end, let $m \in \NN$, and set $\theta \coloneqq \ell_m - \ell$, so
	that $\ell + \theta = \ell_m$ and $\theta < \ell_1 - \ell < \theta_0$. Inserting this into \eqref{eq:choicetheta} gives
	\begin{equation}\label{eq:estimateEComplement}
  	\abs{ (\ell_{m + 1}, \ell_m ) \setminus E }
  	\leq
  	\abs{ (\ell, \ell_m) \setminus E }
		<
		\frac{1 - q}{2(1 + q)} \abs{ (\ell, \ell_m) \cap E }
		.
	\end{equation}
	In order to bound the right-hand side further, we observe that
	\begin{align*}
		\ell_{m+1} - \ell
		&=
		q^m(\ell_1 - \ell)
			=
			\frac{q}{1-q} \, (1 - q) q^{m - 1} (\ell_1 - \ell)\\
		&=
		\frac{q}{1 - q} \, (q^{m - 1} - q^m) (\ell_1 - \ell) 
			=
			\frac{q}{1 - q} \, (\ell_m - \ell_{m + 1})
		.
	\end{align*}
	This leads to the estimate
	\begin{align*}
		\abs{ (\ell, \ell_m) \cap E }
		&\leq
		\abs{ (\ell, \ell_{m+1}) } + \abs{ (\ell_{m+1},\ell_m) \cap E }\\
		&\leq
		\frac{q}{1-q} (\ell_m - \ell_{m+1}) + \abs{ (\ell_{m+1},\ell_m) \cap E }
		,
	\end{align*}
	which together with \eqref{eq:estimateEComplement} and the bounds $q/(1+q) < 1$ and $(1-q)/(1+q) < 1$ yields
	\[
    \abs{ (\ell_{m + 1}, \ell_m ) \setminus E }
    <
    \frac{1}{2} \Bigl( \abs{ (\ell_{m + 1}, \ell_m) \cap E } + (\ell_m - \ell_{m + 1}) \Bigr)
    .
	\]
	From this we conclude that
	\begin{align*}
    \ell_m - \ell_{m + 1}
		=
    \abs{ (\ell_{m + 1}, \ell_m ) \cap E } + \abs{ (\ell_{m + 1}, \ell_m ) \setminus E }
    \leq
    \frac{3}{2} \abs{ (\ell_{m + 1}, \ell_m) \cap E } + \frac{1}{2} \bigl( \ell_{m} - \ell_{m + 1} \bigr)
    ,
	\end{align*}
	and rearranging terms shows the desired estimate \eqref{eq:ellmEstimate}. This completes the proof.
\end{proof}%

\begin{remark}\label{rm:phungw-13}
	The proof of Proposition~\ref{prop:phungw-13} shows that \eqref{eq:choicetheta} can actually be relaxed to
	\[
		\abs{ (\ell, \ell + \theta) \setminus E }
		<
		\min\Bigl\{ \frac{1 - q}{2q} , \frac{1}{2} \Bigr\} \, \abs{ (\ell, \ell + \theta) \cap E }
		\quad\text{ for all }\
		\theta < \theta_0
	\]
	with the corresponding modification to \eqref{eq:estimateEComplement}. The rest of the proof then carries over.
\end{remark}

\section*{Acknowledgements}
The first author is grateful to Christian Seifert for the supervision of his Ph.D.\ project in which a similar version of
Theorem~\ref{thm:observability} was proven, cf.~\cite{Gabel_diss}. The second author has been partially supported by the DFG
grant VE 253/10-1 entitled \textit{Quantitative unique continuation properties of elliptic PDEs with variable 2nd order
coefficients and applications in control theory, Anderson localization, and photonics}.

%%%%%%%%%%%%%%%%%%%%%%%%%%%%%%%%%%%%%%%%%%%%%%%%%%%%%%%%%%%%%%%%%%%%%%%%%%%%%%%%%%%%%%%%%%%%%%%%%%%%%%%%%%%%%%%%%%%%%%%%%%%%%%%%%%%%%%%%%%%%%%%%%%%%%%%%%%%%%%%%%%%%
\begin{bibdiv}
\begin{biblist}

\bibitem{BeauchardEP-S-20}
K.~Beauchard, M.~Egidi, and K.~Pravda-Starov. 
\newblock{\em Geometric conditions for the null-controllability of hypoelliptic quadratic parabolic equations with moving control
	supports\/}. 
\newblock C.~R.~Math.~Acad.~Sci.~Paris~\textbf{358} (2020), pp. 651--700. 
\newblock \doi{10.5802/crmath.79}.

\bibitem{BeauchardP}
K.~Beauchard and K.~Pravda-Starov. 
\newblock{\em Null-controllability of hypoelliptic quadratic differential equations.\/} 
\newblock J.~\'Ec.~Polytech.~Math.~\textbf{5} (2018), pp.~1--43. 
\newblock\doi{10.5802/jep.62}.

\bibitem{BombachGST-20}
C.~Bombach, D.~Gallaun, C.~Seifert, and M.~Tautenhahn. 
\newblock{\em Observability and null-controllability for parabolic equations in $L_p$-spaces\/}.
\newblock Math.~Control Relat.~Fields (2022).
\newblock \doi{10.3934/mcrf.2022046}.

\bibitem{BombachGST-22}
C.~Bombach, F.~Gabel, C.~Seifert, and M.~Tautenhahn. 
\newblock{\em Observability for non-autonomous systems.\/} 
\newblock SIAM J.~Control Optim~\textbf{61} (2023), pp.~315--341. 
\newblock \doi{10.1137/22M1485139}.

\bibitem{EgidiNSTTV-20}
M.~Egidi, I.~Naki\'c, A.~Seelmann, M.~T\"aufer, M.~Tautenhahn, and I.~Veseli\'c.
\newblock{\em Null-controllability and control cost estimates for the heat equation on unbounded and large bounded domains},
\newblock in: J.~Kerner, H.~Laasri, D.~Mugnolo (Eds.), Control Theory of Infinite-Dimensional Systems,
volume 277 of Linear Operators and Linear Systems, Birkh\"auser, Basel, 2020, pp.~117--157.
\newblock\doi{10.1007/978-3-030-35898-3_5}.

\bibitem{Fattorini-05}
H.O.~Fattorini. 
\newblock{\em Infinite Dimensional Linear Control Systems - The Time Optimal and Norm Optimal Problems\/}.
\newblock North-Holland Mathematics Studies, Vol.~201, Elsevier, Amsterdam, 2005.
\newblock\doi{10.1016/S0304-0208(05)X8014-8}.

\bibitem{Gabel_diss}
F.~Gabel. 
\newblock{\em On Spectral Theory, Control, and Higher Regularity of Infinite-dimensional Operator Equations\/}. 
\newblock Ph.D.~thesis, TU Hamburg, 2023. 
\newblock \doi{10.15480/882.5197}.

\bibitem{GallaunST-20}
D.~Gallaun, C.~Seifert, and M.~Tautenhahn. 
\newblock{\em Sufficient criteria and sharp geometric conditions for observability in Banach spaces\/}. 
\newblock SIAM J.~Control Optim.~\textbf{58} (2020), pp.~2639--2657. 
\newblock \doi{10.1137/19M1266769}.

\bibitem{Gallaun_diss}
D.~Gallaun. 
\newblock{\em On Observability Estimates for Semigroups in Banach Spaces\/}. 
\newblock Ph.D.~thesis, TU Hamburg, 2022. 
\newblock \doi{10.15480/882.4770}.

\bibitem{GallaunMS-23}
D.~Gallaun, J.~Meichsner, and C.~Seifert.
\newblock{\em Final state observability in {B}anach spaces with applications to subordination and semigroups induced by {L}évy processes\/}.
\newblock Evol.~Equ.~Control Theory~\textbf{12} (2023), pp.~1102--1121.
\newblock\doi{10.3934/eect.2023002}.

\bibitem{KruseS-23}
K.~Kruse and C.~Seifert.
\newblock{\em Final state observability estimates and cost-uniform approximate null-controllability for bi-continuous semigroups\/}.
\newblock Semigroup Forum~\textbf{106} (2023), pp.~421--443.
\newblock\doi{10.1007/s00233-023-10346-1}.

\bibitem{Mattila-99}
P.~Mattila.
\newblock{\em Geometry of Sets and Measures in {E}uclidean Spaces. Fractals and Rectifiability.}
\newblock Cambridge Stud.~Adv.~Math., Vol.~44, Cambridge University Press, Cambridge, 1999.
\newblock\doi{10.1017/CBO9780511623813}.

\bibitem{Miller-10}
L.~Miller. 
\newblock{\em A direct {L}ebeau-{R}obbiano strategy for the observability of heat-like semigroups\/}.
\newblock Discrete Contin.~Dyn.~Syst.~Ser.~B~\textbf{14} (2010), pp.~1465--1485. 
\newblock \doi{10.3934/dcdsb.2010.14.1465}.

\bibitem{NakicTTV-20}
I.~Naki\'c, M.~T\"aufer, M.~Tautenhahn, and I.~Veseli\'c.
\newblock{\em Sharp estimates and homogenization of the control cost of the heat equation on large domains.}
\newblock ESAIM Control Optim.~Calc.~Var.~\textbf{26} (2020),  Paper No.~ 54, 26 pp. 
\newblock \doi{10.1051/cocv/2019058}.

\bibitem{PhungW-13}
K.D.~Phung and G.~Wang. 
\newblock{\em An observability estimate for parabolic equations from a measurable set in time and its applications\/}.
\newblock J.~Eur.~Math.~Soc.~(JEMS)~\textbf{15} (2013), pp.~681--703. 
\newblock \doi{10.4171/JEMS/371}.

\end{biblist}
\end{bibdiv}

\end{document}